\newtheorem{prop}{Proposition}[section]
\newtheorem{lem}[prop]{Lemma}
\newtheorem{theorem}[prop]{Theorem}
\newtheorem*{theorem*}{Theorem}
\theoremstyle{definition}
\newtheorem{defu}[prop]{Definition}
\newtheorem*{defu*}{Definition}
\newtheorem{rem}[prop]{Remark}
\newcommand{\st}{_\textnormal{st}}
\newcommand{\hr}{_\textnormal{hr}}
\newcommand{\lr}{_\textnormal{lr}}
\begin{document}
\pagenumbering{arabic}
\setcounter{page}{1}

\title{On the restrictiveness of the hazard rate order}

\author{Sela Fried \thanks{This work was done while the author was a postdoctoral fellow in the Department of Computer Science at the Ben-Gurion University of the Negev.} }
\date{} 
\maketitle
\begin{abstract}
Every element $\theta=(\theta_1,\ldots,\theta_n)$ of the probability $n$-simplex induces a probability distribution $P_\theta$ of a random variable $X$ that can assume only a finite number of real values $x_1 < \cdots < x_n$ by defining $P_\theta(X=x_i) = \theta_i, 1\leq i \leq n$. We show that if $\Theta$ and $\Theta'$ are two random vectors uniformly distributed on $\Delta^n$, then $P(P_\Theta\leq\hr P_{\Theta'})=\frac{1}{2^{n-1}}$ where $\leq\hr$ denotes the hazard rate order.
\end{abstract} 

\section{Introduction}

Stochastic orders are partial orders that are used to compare probability distributions and have applications in diverse areas of probability and statistics. Since they are, in general, only partial orders, it may well happen that two given probability distributions are not comparable with respect to some stochastic order of interest. Although they are well studied (see, for example, \cite{MS} or \cite{Sh}), to the best of our knowledge, the question of their restrictiveness has not yet been addressed, in terms of how likely it is for two randomly chosen probability distributions to be comparable with respect to a certain stochastic order. In this work we answer this question for probability distributions of random variables that can assume only a finite number of real numbers and for an important and common stochastic order: the hazard rate order, denoted by $\leq\hr$. 

More precisely, consider a vector $\theta=(\theta_1,\ldots,\theta_n)$ of real numbers such that $\theta_1,\ldots,\theta_n\geq 0$ and $\theta_1 + \cdots + \theta_n=1$. Thus, $\theta$ is an element of the probability $n$-simplex $\Delta^n$. It induces a probability distribution $P_\theta$ of a random variable $X$ that can assume only a finite number of real numbers $x_1<\cdots<x_n$ by defining $P_\theta(X = x_i) = \theta_i, \;1\leq i\leq n$. Our main result is  

\begin{theorem*} 
Let $n\in\mathbb{N}$ and let $\Theta$ and $\Theta'$ be two independent random vectors uniformly distributed on the probability $n$-simplex $\Delta^n$. Then
$$P(P_\Theta\leq\hr P_{\Theta'})=\frac{1}{2^{n-1}}.$$
\end{theorem*}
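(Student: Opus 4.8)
The plan is to reduce the seemingly two-dimensional, $n$-variable hazard-rate comparison to a product of $n-1$ \emph{independent} one-dimensional comparisons, each of which is a fair coin flip by symmetry. Throughout write $T_i(\theta)=\theta_i+\cdots+\theta_n$ for the survival function of $P_\theta$ at $x_i$, so that $T_1(\theta)=1$ and $T_{n+1}(\theta)=0$. The starting point is the standard characterization of the hazard rate order for distributions supported on the common finite set $\{x_1,\ldots,x_n\}$: one has $P_\theta\leq\hr P_{\theta'}$ if and only if the ratio of survival functions $T_i(\theta')/T_i(\theta)$ is nondecreasing in $i$.

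The crucial manipulation is to pass to the ``remaining-mass ratios'' $\rho_i=T_{i+1}(\theta)/T_i(\theta)\in[0,1]$ for $i=1,\ldots,n-1$, and likewise $\rho_i'=T_{i+1}(\theta')/T_i(\theta')$ for $\theta'$. Since the defining ratio telescopes, the quotient of two consecutive terms is
$$\frac{T_{i+1}(\theta')/T_{i+1}(\theta)}{T_i(\theta')/T_i(\theta)}=\frac{\rho_i'}{\rho_i},$$
so the monotonicity condition collapses into the conjunction
$$P_\theta\leq\hr P_{\theta'}\iff \rho_i\leq\rho_i'\quad\text{for every }i=1,\ldots,n-1.$$
Thus the hazard rate comparison decouples completely into coordinatewise comparisons of the $\rho_i$ against the $\rho_i'$.

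Next I would determine the joint law of $(\rho_1,\ldots,\rho_{n-1})$ when $\Theta$ is uniform on $\Delta^n$. Since the uniform distribution on $\Delta^n$ is $\mathrm{Dirichlet}(1,\ldots,1)$, the stick-breaking representation shows that the $\rho_i$ are \emph{mutually independent} with $\rho_i\sim\mathrm{Beta}(n-i,1)$; equivalently, a short Jacobian computation for the lower-triangular change of variables $\theta\mapsto(\rho_1,\ldots,\rho_{n-1})$ (using $\theta_i=(1-\rho_i)\prod_{k<i}\rho_k$) shows that the joint density factors as $\prod_{i=1}^{n-1}(n-i)\rho_i^{\,n-i-1}$. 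The same holds for $\Theta'$, independently of $\Theta$. Consequently the pairs $(\rho_i,\rho_i')$ are independent across $i$, so
$$P\!\left(P_\Theta\leq\hr P_{\Theta'}\right)=\prod_{i=1}^{n-1}P(\rho_i\leq\rho_i'),$$
and each factor equals $\tfrac12$ because $\rho_i$ and $\rho_i'$ are independent and identically distributed with a continuous law. This gives exactly $1/2^{n-1}$.

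The conceptual crux, and the step I expect to carry the weight of the argument, is recognizing that the ratio-monotonicity defining $\leq\hr$ telescopes into $n-1$ comparisons of the coordinates $\rho_i$; once this is seen, the only genuine technical point is verifying the mutual independence and the $\mathrm{Beta}(n-i,1)$ marginals of these coordinates under the uniform measure (via stick-breaking or the Jacobian), after which the symmetry argument $P(\rho_i\leq\rho_i')=\tfrac12$ and the factorization are immediate.
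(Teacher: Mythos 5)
Your proof is correct, and it takes a genuinely different route from the paper's. The paper first computes the conditional probability $P(\Theta'\geq\hr\theta)$ for a fixed $\theta$ by induction on $n$ (its Lemma 3.3), obtaining $\prod_{i=1}^{n-1}\bigl(1-(T_{i+1}(\theta)/T_i(\theta))^{n-i}\bigr)$ in your notation, and then integrates this product over the simplex via the stick-breaking substitution $\theta_i=(1-y_{i+1})\prod_{j\leq i}y_j$, splitting the integrand into two pieces that evaluate to $2^{-(n-2)}-2^{-(n-1)}$. You instead symmetrize over $\Theta$ and $\Theta'$ simultaneously: the defining cross-product inequalities reduce (almost surely, since all the partial sums $T_i$ are positive) to the $n-1$ consecutive comparisons $\rho_i\leq\rho_i'$ of the stick-breaking coordinates, which under $\mathrm{Dirichlet}(1,\dots,1)$ are mutually independent with $\rho_i\sim\mathrm{Beta}(n-i,1)$, so each comparison is a fair coin flip and the answer is $2^{-(n-1)}$. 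Your route is cleaner and more conceptual: it dispenses with both the inductive computation and the closing integral, and it in fact explains the paper's intermediate formula, since the paper's Lemma 3.3 is exactly $\prod_{i=1}^{n-1}P(\rho_i'\geq\rho_i\mid\rho_i)$ with $P(\rho_i'\geq t)=1-t^{n-i}$; the paper's final change of variables is the same stick-breaking map whose probabilistic meaning you have made explicit. The only points worth spelling out in a final write-up are that the reduction of the full family of inequalities over $i\leq j$ to the consecutive ones uses positivity of the $T_i$ (a measure-one event), and that $P(\rho_i=\rho_i')=0$ so the coin flips are exactly fair; both are routine.
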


This work is a continuation of \cite{Fr} where it was shown that, in the notation of the above theorem, $$P(P_\Theta\leq\st P_{\Theta'})=\frac{1}{n}\;\; \textnormal{ and }\;\; P(P_\Theta\leq\lr P_{\Theta'})=\frac{1}{n!}$$ where $\leq\st$ and $\leq\lr$ denote the usual stochastic order and the likelihood ratio order, respectively.

\section{Preliminaries}

In this work, $n\geq 2$ is a natural number and $u$ a positive real number. 

\begin{defu}\label{def; simplex}
We denote
$$ \Delta^{n,u} =   \{(\theta_1,\ldots,\theta_n)\in\mathbb{R}^n\; |\; 
  \theta_1 + \cdots + \theta_n = u,\;  \theta_i \geq 0, \;1\leq i \leq n\}.$$
If $u=1$ then $\Delta^n = \Delta^{n,1}$ is merely the probability $n$-simplex.
\end{defu}

\begin{lem}\label{lem 2}
The volume $\textnormal{Vol}(\Delta^{n,u})$ of $\Delta^{n,u}$ is $\frac{\sqrt{n}u^{n-1}}{(n-1)!}$.
\end{lem}
\begin{proof}
By \cite{Ell}, the volume of the set $$
\Sigma^{n-1,u} =  \{(\theta_1,\ldots,\theta_{n-1})\in\mathbb{R}^{n-1}\; |\; 
  \theta_1 + \cdots + \theta_{n-1}\leq u,\;  \theta_i \geq 0, \;1\leq i \leq n-1\}
$$ is $\frac{u^{n-1}}{(n-1)!}$. By \cite[Theorem on p. 13]{Jon}, $\textnormal{Vol}(\Delta^{n,u})=\sqrt{n}\textnormal{Vol}(\Sigma^{n-1,u})$.
\end{proof}

\begin{defu}\label{def; par} 
Let $\preceq$ be a partial order on $\Delta^{n,u}, \theta\in\Delta^{n,u}$. We denote $$\Delta^{n,u}_{\succeq \theta} = \{\theta'\in\Delta^{n,u}\;|\;\theta'\succeq \theta\} .$$
\end{defu}

\section{The hazard rate order}

The following definition is a modification of the definition in \cite[1.B.10 on p. 17]{Sh}:

\begin{defu}\label{def; fosd}
Let $\theta=(\theta_1,\ldots,\theta_n), \theta'=(\theta'_1,\ldots,\theta'_n)\in \Delta^{n,u}$. We say that \textbf{$\theta$ is smaller than $\theta'$ in the hazard rate order} and write $\theta\leq\hr \theta'$ if $\Big(\sum_{k=i}^n \theta_k\Big)\Big(\sum_{k=j}^n \theta'_k\Big)\geq\Big(\sum_{k=j}^n \theta_k\Big)\Big(\sum_{k=i}^n \theta'_k\Big)$ for each $1\leq i\leq j\leq n$.
\end{defu}

The following lemma shows that given the last coordinate, the hazard rate order can be verified in one dimension less. Its proof is an easy exercise.

\begin{lem}\label{lem 1}
Let $\theta=(\theta_1,\ldots,\theta_{n+1}), \theta'=(\theta'_1,\ldots,\theta'_{n+1})\in\Delta^{n+1,u}$. Suppose $\theta'_{n+1}<u$. Then $\theta\leq\hr \theta'  \;(\text{in } \Delta^{n+1,u})$ if and only if $\theta'_1\leq\theta_1$ and
$$
(\frac{\theta_2}{v}, \ldots, \frac{\theta_{n+1}}{v}) \leq\hr  (\theta'_2,\ldots,\theta'_{n+1})\;( \text{in }\Delta^{n,u-\theta'_{n+1}}) \text{ where } v = \frac{\sum_{k=2}^{n+1}\theta_k}{u-\theta'_1}.
$$
\end{lem}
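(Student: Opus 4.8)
The plan is to rewrite the relation $\leq\hr$ entirely in terms of tail sums and then to separate out the inequalities that involve the first coordinate. For $\xi=(\xi_1,\ldots,\xi_{n+1})$ write $S_i(\xi)=\sum_{k=i}^{n+1}\xi_k$, so that $S_1(\xi)=u$ for $\xi\in\Delta^{n+1,u}$ and $i\mapsto S_i(\xi)$ is nonincreasing. By Definition \ref{def; fosd}, $\theta\leq\hr\theta'$ in $\Delta^{n+1,u}$ is the assertion
\[ S_i(\theta)\,S_j(\theta')\;\geq\;S_j(\theta)\,S_i(\theta')\qquad\text{for all }1\leq i\leq j\leq n+1; \]
call this system $(\ast)$. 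The defining inequalities are separately homogeneous of degree one in $\theta$ and in $\theta'$, so multiplying $\theta$ by a positive constant does not affect the relation; the factor $v$ therefore plays no role except to rescale the tail of $\theta$ so that $\tilde\theta:=(\theta_2/v,\ldots,\theta_{n+1}/v)$ and $\tilde\theta':=(\theta'_2,\ldots,\theta'_{n+1})$ have the same total $u-\theta'_1$ and hence lie in the common simplex required by Definition \ref{def; fosd}. (Well-definedness of $v$ presupposes $u-\theta'_1>0$ and $S_2(\theta)=\sum_{k=2}^{n+1}\theta_k>0$, which I take as the standing nondegeneracy.)

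First I would record the reduced relation in tail-sum form. Indexing $\tilde\theta,\tilde\theta'$ by $i'\in\{1,\ldots,n\}$ one computes $S_{i'}(\tilde\theta)=S_{i'+1}(\theta)/v$ and $S_{i'}(\tilde\theta')=S_{i'+1}(\theta')$, so that, after the factor $v>0$ cancels from every inequality, $\tilde\theta\leq\hr\tilde\theta'$ in $\Delta^{n,u-\theta'_1}$ becomes
\[ S_i(\theta)\,S_j(\theta')\;\geq\;S_j(\theta)\,S_i(\theta')\qquad\text{for all }2\leq i\leq j\leq n+1, \]
which is precisely the subfamily $(\ast\ast)$ of $(\ast)$ with $i\geq 2$. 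The inequalities of $(\ast)$ with $i=1$ are the remaining ones; since $S_1(\theta)=S_1(\theta')=u$, the inequality for $i=1$ and index $j$ reduces to $S_j(\theta')\geq S_j(\theta)$ (vacuous at $j=1$). Thus
\[ (\ast)\iff(\ast\ast)\ \wedge\ \big(S_j(\theta')\geq S_j(\theta)\ \text{ for all }2\leq j\leq n+1\big). \]
Finally, because $\theta_1=u-S_2(\theta)$ and $\theta'_1=u-S_2(\theta')$, the hypothesis $\theta'_1\leq\theta_1$ is exactly the single inequality $S_2(\theta')\geq S_2(\theta)$, the $j=2$ member of the displayed family.

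It then remains to prove the crux: under $(\ast\ast)$, the single inequality $S_2(\theta')\geq S_2(\theta)$ already forces $S_j(\theta')\geq S_j(\theta)$ for every $j\geq 2$. Taking $i=2$ in $(\ast\ast)$ gives $S_2(\theta)\,S_j(\theta')\geq S_j(\theta)\,S_2(\theta')$; chaining with $S_2(\theta')\geq S_2(\theta)$ and $S_j(\theta)\geq 0$ yields $S_2(\theta)\,S_j(\theta')\geq S_j(\theta)\,S_2(\theta)$, and dividing by $S_2(\theta)>0$ gives $S_j(\theta')\geq S_j(\theta)$. Combining this with the two displayed equivalences shows $(\ast)$ holds if and only if both $(\ast\ast)$ and $S_2(\theta')\geq S_2(\theta)$ hold, i.e.\ if and only if $\tilde\theta\leq\hr\tilde\theta'$ and $\theta'_1\leq\theta_1$, as claimed. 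The main obstacle is exactly this last step: the forward implication only restricts the family $(\ast)$, but the reverse implication must recover the whole family $\{S_j(\theta')\geq S_j(\theta)\}_{j\geq2}$ from its single $j=2$ instance, and it is here that the monotonicity packaged in $(\ast\ast)$, together with nonnegativity of the tail sums and the legitimacy of dividing by $S_2(\theta)>0$, has to be used.
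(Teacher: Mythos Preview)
Your argument is correct. The paper does not actually prove this lemma---it states that ``its proof is an easy exercise'' and moves on---so there is no approach to compare against; your tail-sum reformulation, the homogeneity observation that makes the scaling by $v$ harmless, and the key step that under $(\ast\ast)$ the single inequality $S_2(\theta')\geq S_2(\theta)$ propagates to all $S_j(\theta')\geq S_j(\theta)$ via the $i=2$ row of $(\ast\ast)$, together give exactly the kind of direct verification the paper has in mind.

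Two minor remarks. First, you silently write $\Delta^{n,u-\theta'_1}$ where the paper writes $\Delta^{n,u-\theta'_{n+1}}$; your version is the correct one, since $(\theta'_2,\ldots,\theta'_{n+1})$ sums to $u-\theta'_1$, and similarly the hypothesis $\theta'_{n+1}<u$ in the paper should be read as $\theta'_1<u$ to ensure $v$ is well defined. Second, your explicit nondegeneracy assumption $S_2(\theta)>0$ is exactly what is needed to divide in the propagation step and is implicit in the very formation of $\tilde\theta=(\theta_2/v,\ldots,\theta_{n+1}/v)$, so it is appropriate to take it as standing.
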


\begin{lem}\label{lem 3}
Let $\Theta$ be a random variable uniformly distributed on $\Delta^{n,u}$ and let $\theta=(\theta_1,\ldots,\theta_n)\in\Delta^{n,u}$. Then 
$$P(\Theta\geq\hr \theta)=\prod_{i=1}^{n-1}\frac{\Big(\sum_{j=i}^n \theta_j\Big)^{n-i}-\Big(\sum_{j=i+1}^n \theta_j\Big)^{n-i}}   {\Big(\sum_{j=i}^n \theta_j\Big)^{n-i}} .$$
\end{lem}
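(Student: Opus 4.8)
The plan is to induct on $n$, using Lemma~\ref{lem 1} to strip off the first coordinate. To state the target compactly, write $S_i=\sum_{j=i}^n\theta_j$ and $\rho_i=S_{i+1}/S_i$, so that the assertion reads $P(\Theta\geq\hr\theta)=\prod_{i=1}^{n-1}\big(1-\rho_i^{\,n-i}\big)$. The base case $n=1$ is immediate, the simplex being a single point and the empty product being $1$; so assume the formula holds in dimension $n-1$ for every positive total mass, and recall from Lemma~\ref{lem 2} that the marginal density of $\Theta_1$ under the uniform law on $\Delta^{n,u}$ is proportional to $(u-t)^{n-2}$, namely $\frac{(n-1)(u-t)^{n-2}}{u^{n-1}}$ on $[0,u]$.

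Since $\Theta_n<u$ almost surely, I would apply Lemma~\ref{lem 1} with $\theta'=\Theta$: the event $\theta\leq\hr\Theta$ becomes the conjunction of $\Theta_1\leq\theta_1$ with the reduced relation $(\theta_2/v,\ldots,\theta_n/v)\leq\hr(\Theta_2,\ldots,\Theta_n)$ in $\Delta^{n-1,\,u-\Theta_1}$, where $v=(u-\theta_1)/(u-\Theta_1)$. Conditioning on $\Theta_1=t$ (for $0\le t\le\theta_1$), a standard disintegration of the uniform law on the simplex makes $(\Theta_2,\ldots,\Theta_n)$ uniform on $\Delta^{n-1,\,u-t}$, so the inductive hypothesis applies to the rescaled point $\tilde\theta=(\theta_2/v,\ldots,\theta_n/v)$ and computes the conditional probability of the reduced relation.

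The crux is a scale-invariance. The tail sums of $\tilde\theta$ are $S_{i+1}/v$, so its consecutive ratios are $\rho_2,\ldots,\rho_{n-1}$ (the factor $v$ cancels), and the exponents $n-i$ shift to match; hence the inductive probability equals $\prod_{i=2}^{n-1}\big(1-\rho_i^{\,n-i}\big)$, which does not depend on $t$. It therefore pulls out of the integral over $t$, giving $P(\Theta\geq\hr\theta)=\big(\prod_{i=2}^{n-1}(1-\rho_i^{\,n-i})\big)\,P(\Theta_1\leq\theta_1)$. Integrating the marginal density yields $P(\Theta_1\leq\theta_1)=1-\big((u-\theta_1)/u\big)^{n-1}=1-\rho_1^{\,n-1}$, which is exactly the missing $i=1$ factor, completing the induction.

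The step I expect to demand the most care is the scale-invariance bookkeeping of the third paragraph: one must check that rescaling by $v$ and reindexing carry the $(n-1)$-dimensional product precisely onto the factors $i=2,\ldots,n-1$ of the $n$-dimensional one, so that the $t$-dependence disappears and the marginal integral contributes the single remaining factor. The other thing to watch is the degenerate configurations on which Lemma~\ref{lem 1} does not directly apply (for instance $\theta_1=u$, or vanishing tail sums making some $\rho_i$ or $v$ ill-defined); these lie on a measure-zero boundary set and should be dispatched either by continuity or by treating them separately.
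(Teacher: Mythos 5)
Your proposal is correct and follows essentially the same route as the paper's proof: induct on $n$, strip the first coordinate via Lemma~\ref{lem 1}, observe that the rescaling by $v$ cancels in the tail-sum ratios so the inductive factor $\prod_{i=2}^{n-1}(1-\rho_i^{\,n-i})$ is independent of $\Theta_1$, and integrate the marginal density of $\Theta_1$ over $[0,\theta_1]$ to produce the remaining factor $1-\rho_1^{\,n-1}$. Your explicit treatment of the scale-invariance bookkeeping and of the measure-zero degenerate configurations is more careful than the paper's one-line computation, but it is the same argument.
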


\begin{proof}
We proceed by induction. Suppose the claim holds for $n$ and let $\theta=(\theta_1,\ldots,\theta_{n+1})\in\Delta^{n+1,u}$. Then 
\begin{align}
P(\Theta\geq\hr \theta) = &\frac{n!}{u^n}\int_0^{\theta_1}\frac{(u-x_1)^{n-1}}{(n-1)!}P(\Theta'\geq\hr(\theta_2/v,\ldots,\theta_{n+1}/v))dx_1 \nonumber \\ =&\frac{1}{u^n}\prod_{i=1}^{n-1}\frac{\Big(\sum_{j=i+1}^{n+1} \theta_j\Big)^{n-i}-\Big(\sum_{j=i+2}^{n+1} \theta_j\Big)^{n-i}} {\Big(\sum_{j=i+1}^{n+1} \theta_j\Big)^{n-i}}(u^n-(u-\theta_1)^n) \nonumber \\ = & \prod_{i=1}^n\frac{\Big(\sum_{j=i}^{n+1} \theta_j\Big)^{n+1-i}-\Big(\sum_{j=i+1}^{n+1} \theta_j\Big)^{n+1-i}} {\Big(\sum_{j=i}^{n+1} \theta_j\Big)^{n+1-i}}\nonumber.
\end{align}
\end{proof}
We come to the main result of this work:
\begin{theorem}
Let $\Theta$ and $\Theta'$ be two independent random vectors uniformly distributed on $\Delta^{n,u}$. Then $$P(\Theta \leq\hr \Theta')=\frac{1}{2^{n-1}}.$$
\end{theorem}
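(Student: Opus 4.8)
The plan is to average Lemma~\ref{lem 3} over the law of $\Theta$. Since $\theta\leq\hr\theta'$ means the same as $\theta'\geq\hr\theta$, conditioning on $\Theta=\theta$ gives $P(\theta\leq\hr\Theta')=P(\Theta'\geq\hr\theta)$, which is exactly the product computed in Lemma~\ref{lem 3}. Hence
$$P(\Theta\leq\hr\Theta')=\frac{1}{\textnormal{Vol}(\Delta^{n,u})}\int_{\Delta^{n,u}}\prod_{i=1}^{n-1}\frac{\big(\sum_{j=i}^n\theta_j\big)^{n-i}-\big(\sum_{j=i+1}^n\theta_j\big)^{n-i}}{\big(\sum_{j=i}^n\theta_j\big)^{n-i}}\,dS(\theta),$$
where $dS$ denotes the surface measure. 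Projecting onto the first $n-1$ coordinates replaces $dS$ by $\sqrt{n}\,d\theta_1\cdots d\theta_{n-1}$ on $\Sigma^{n-1,u}$, and this $\sqrt n$ cancels against the one in $\textnormal{Vol}(\Delta^{n,u})=\frac{\sqrt n\,u^{n-1}}{(n-1)!}$ supplied by Lemma~\ref{lem 2}. Writing $s_i=\sum_{j=i}^n\theta_j$, so that $s_1=u$, the claim reduces to showing
$$J_n(u):=\int_{\Sigma^{n-1,u}}\prod_{i=1}^{n-1}\frac{s_i^{\,n-i}-s_{i+1}^{\,n-i}}{s_i^{\,n-i}}\,d\theta_1\cdots d\theta_{n-1}=\frac{u^{n-1}}{(n-1)!\,2^{n-1}},$$
because then $P(\Theta\leq\hr\Theta')=\frac{(n-1)!}{u^{n-1}}J_n(u)=\frac{1}{2^{n-1}}$.

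I would establish the identity for $J_n(u)$ by induction on $n$, reusing the recursive structure already present in the proof of Lemma~\ref{lem 3}. The base case $n=1$ is the empty product $J_1(u)=1$. For the inductive step, integrate out $\theta_1$ first. The factor $i=1$ is $\frac{u^{\,n-1}-(u-\theta_1)^{\,n-1}}{u^{\,n-1}}$ and depends on $\theta_1$ alone, while the remaining factors $i=2,\ldots,n-1$ involve only $s_2,\ldots,s_n$. Fixing $\theta_1$ and setting $w=u-\theta_1=s_2$, the variables $(\theta_2,\ldots,\theta_{n-1})$ range over $\Sigma^{n-2,w}$; reindexing by $\eta_k=\theta_{k+1}$ identifies $s_{k+1}$ with the $k$-th partial sum of the $\eta$'s, so the product over $i=2,\ldots,n-1$ becomes precisely the integrand of $J_{n-1}(w)$. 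This yields the recursion
$$J_n(u)=\int_0^u\frac{u^{\,n-1}-(u-\theta_1)^{\,n-1}}{u^{\,n-1}}\,J_{n-1}(u-\theta_1)\,d\theta_1.$$

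Substituting the induction hypothesis $J_{n-1}(w)=\frac{w^{\,n-2}}{(n-2)!\,2^{\,n-2}}$ and changing variables to $w=u-\theta_1$ reduces everything to the elementary integral
$$J_n(u)=\frac{1}{u^{\,n-1}(n-2)!\,2^{\,n-2}}\int_0^u\big(u^{\,n-1}-w^{\,n-1}\big)w^{\,n-2}\,dw=\frac{1}{u^{\,n-1}(n-2)!\,2^{\,n-2}}\cdot\frac{u^{\,2n-2}}{2(n-1)},$$
which simplifies to $\frac{u^{\,n-1}}{(n-1)!\,2^{\,n-1}}$, completing the induction. The step I expect to require the most care is the reindexing that recognizes the inner product as $J_{n-1}(w)$ with the reduced total mass $w=u-\theta_1$; once that is in place the remaining one-dimensional integral is a routine Beta-type evaluation. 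It is a reassuring sanity check that the final probability $\frac{1}{2^{n-1}}$ is independent of $u$, as it must be since $\leq\hr$ is invariant under the common scaling $\Delta^{n,u}\to\Delta^{n,1}$.
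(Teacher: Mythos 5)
Your proposal is correct, and after the common first step (integrating the closed form from Lemma~\ref{lem 3} against the uniform law of $\Theta$, with the $\sqrt n$ from the surface measure cancelling against Lemma~\ref{lem 2}) it evaluates the resulting integral by a genuinely different route. The paper attacks the full $(n-1)$-dimensional integral at once: it splits the $i=1$ factor into two pieces and applies a multiplicative change of variables $\theta_i=(1-y_{i+1})\prod_{j=2}^i y_j$ (with Jacobian $\prod y_i^{n-i}$) that factors each piece into a product of one-dimensional integrals, arriving at $\frac{1}{2^{n-2}}-\frac{1}{2^{n-1}}$. You instead set up a recursion $J_n(u)=\int_0^u\frac{u^{n-1}-(u-\theta_1)^{n-1}}{u^{n-1}}J_{n-1}(u-\theta_1)\,d\theta_1$ by peeling off $\theta_1$, mirroring the dimension-reduction already used to prove Lemma~\ref{lem 3} itself, and close the induction with a single Beta-type integral. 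I checked the reindexing $\eta_k=\theta_{k+1}$, $t_k=s_{k+1}$: the exponents $n-i$ do become $(n-1)-k$, so the inner product is exactly the integrand of $J_{n-1}(u-\theta_1)$, and the one-dimensional integral $\int_0^u(u^{n-1}-w^{n-1})w^{n-2}\,dw=\frac{u^{2n-2}}{2(n-1)}$ gives $J_n(u)=\frac{u^{n-1}}{(n-1)!\,2^{n-1}}$ as claimed. Your approach buys a shorter and more self-contained computation (no Jacobian for a global substitution, no splitting into two separate multi-dimensional integrals), at the cost of being purely recursive rather than exhibiting the integral as an explicit product; the paper's substitution, by contrast, makes the factorized structure of the answer visible in one shot. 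The only cosmetic point is your base case: the paper assumes $n\geq 2$ throughout, so you should either start the induction at $n=2$ with the direct computation $J_2(u)=\int_0^u\theta_1/u\,d\theta_1=u/2$, or justify the degenerate convention $J_1(u)=1$ for the empty product on the point $\Sigma^{0,u}$.
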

\begin{proof}
It holds 
\begin{align}
P(\Theta \leq\hr \Theta') = & \frac{(n-1)!}{\sqrt{n}u^{n-1}}\int_{\Delta^{n,u}} \prod_{i=1}^{n-1}\frac{\Big(\sum_{j=i}^n \theta_j\Big)^{n-i}-\Big(\sum_{j=i+1}^n \theta_j\Big)^{n-i}}   {\Big(\sum_{j=i}^n \theta_j\Big)^{n-i}}d(\theta_1,\ldots,\theta_n)\nonumber \\ =& \frac{(n-1)!}{u^{n-1}}\int_0^u\int_0^{u-\theta_2} \cdots\int_0^{u-\sum_{i=2}^{n-1}\theta_i} \prod_{i=1}^{n-1}\frac{\Big(\sum_{j=i}^n \theta_j\Big)^{n-i}-\Big(\sum_{j=i+1}^n \theta_j\Big)^{n-i}}   {\Big(\sum_{j=i}^n \theta_j\Big)^{n-i}} d\theta_n\cdots d\theta_2 \nonumber  \\
=& \frac{(n-1)!}{u^{n-1}}\int_0^u\int_0^{u-\theta_2} \cdots\int_0^{u-\sum_{i=2}^{n-1}\theta_i}\frac{\Big(\sum_{j=1}^n\theta_j\Big)^{n-1}-\Big(\sum_{j=2}^n\theta_j\Big)^{n-1}}{\Big(\sum_{j=1}^n\theta_j\Big)^{n-1}} \cdot \nonumber \\ & \hspace{5.5cm}\prod_{i=2}^{n-1}\frac{\Big(\sum_{j=i}^n \theta_j\Big)^{n-i}-\Big(\sum_{j=i+1}^n \theta_j\Big)^{n-i}}   {\Big(\sum_{j=i}^n \theta_j\Big)^{n-i}} d\theta_n\cdots d\theta_2 \nonumber  \\ =& \frac{(n-1)!}{u^{n-1}}\int_0^u\int_0^{u-\theta_2} \cdots\int_0^{u-\sum_{i=2}^{n-1}\theta_i}\prod_{i=2}^{n-1}\Bigg(1-\frac{\Big(\sum_{j=i+1}^n \theta_j\Big)^{n-i}}{\Big(\sum_{j=i}^n \theta_j\Big)^{n-i}}\Bigg) d\theta_n\cdots d\theta_2 \nonumber \\ &\hspace{1cm}- \frac{(n-1)!}{u^{2n-1}}\int_0^u\int_0^{u-\theta_2} \cdots\int_0^{u-\sum_{i=2}^{n-1}\theta_i}\Big(\Big(\sum_{j=2}^n \theta_j\Big)^{n-1}-\Big(\sum_{j=2}^n \theta_j\Big)\Big(\sum_{j=3}^n \theta_j\Big)^{n-2}\Big)\cdot \nonumber \\& \hspace{5.5cm} \prod_{i=3}^{n-1}\Bigg(1-\frac{\Big(\sum_{j=i+1}^n \theta_j\Big)^{n-i}}{\Big(\sum_{j=i}^n \theta_j\Big)^{n-i}}\Bigg) d\theta_n\cdots d\theta_2  \label{aa}
\end{align}
Consider the following substitution, which is a variation of \cite[Exercise 9.13.1]{Shu}: $$\theta_i=
\begin{cases}
    (1-y_{i+1}) \prod_{j=2}^iy_j,& 2\leq i \leq n-1\\
    \prod_{j=1}^ny_j,              & i=n
\end{cases}.$$ It is easily verified that the Jacobian is given by $\prod_{i=2}^{n-1} y_i^{n-i}$ and that $$\prod_{j=2}^iy_j=\sum_{j=i}^n \theta_j, \;\;2\leq i\leq n.$$ Thus, 

\begin{align}
(\ref{aa})  = &\frac{(n-1)!}{u^{n-1}}\int_0^u y_2^{n-2}dy_2\prod_{i=3}^{n-1}\int_0^1(1-y_{i}^{n+1-i})y_i^{n-i}dy_i\int_0^1 1-y_ndy_n \nonumber \\ &\hspace{1cm}- \frac{(n-1)!}{u^{2n-1}}\int_0^u\int_0^1y_2^{2n-3}y_3^{n-3}-y_2^{2n-3}y_3^{2n-5}dy_3dy_2\cdot \nonumber \\& \hspace{5.5cm} \prod_{i=4}^{n-1}\int_0^1(1-y_{i}^{n+1-i})y_i^{n-i} dy_i\int_0^11-y_ndy_n \nonumber\\= &\frac{(n-1)!}{u^{n-1}}\frac{u^{n-1}}{n-1}\prod_{i=3}^{n-1}\frac{1}{2(n+1-i)}\frac{1}{2} \nonumber \\ &\hspace{1cm}- \frac{(n-1)!}{u^{2n-1}}\Big(\frac{u^{2(n-1)}}{2(n-1)(n-2)}-\frac{u^{2(n-1)}}{2(n-2)2(n-1)}\Big)\prod_{i=4}^{n-1}\frac{1}{2(n+1-i)}\frac{1}{2} \nonumber \\ =& \frac{1}{2^{n-2}}- \frac{1}{2^{n-1}}=\frac{1}{2^{n-1}}. \nonumber
\end{align}


\end{proof}

\end{document}